\documentclass{amsproc}
\numberwithin{equation}{section}
\usepackage{hyperref}
\hypersetup{nesting=true,debug=true,naturalnames=true}
\usepackage{graphicx,amssymb,upref}
\usepackage[usenames,dvipsnames]{pstricks}
\usepackage{epsfig}
\usepackage{color}
\usepackage[utf8]{inputenc}
\usepackage[T1]{fontenc}
\usepackage{amsmath, amsthm}
\usepackage[english]{babel}
\usepackage{amssymb}
\usepackage{latexsym}
\usepackage{graphicx,amssymb,upref,amsfonts}
\usepackage[all]{xy}
\usepackage{tikz}
\usetikzlibrary{matrix} 
\usepackage{mathtools}
\usepackage{pdfpages}
\usepackage{multirow}
\usepackage{comment}
\usepackage{float}
\usepackage{appendix}
\allowdisplaybreaks

\newcounter{zlist}

  \newcounter{blist} 
  \newenvironment{blist}{\begin{list}{(\alph{blist})}{ 
  \usecounter{blist}\leftmargin2.5em\labelwidth2em\labelsep0.5em 
  \topsep0.6ex 
  \parsep0.3ex plus0.2ex minus0.1ex}}{\end{list}} 

  \newcounter{rlist} 
  \newenvironment{rlist}{\begin{list}{(\roman{rlist})}{ 
  \usecounter{rlist}\leftmargin2.5em\labelwidth2em\labelsep0.5em 
  \topsep0.6ex 
  \parsep0.3ex plus0.2ex minus0.1ex}}{\end{list}}

\date{\today}

\newtheorem{theorem}{Theorem}[section]

\newtheorem{proposition}[theorem]{Proposition}
\newtheorem{corollary}[theorem]{Corollary}
\theoremstyle{definition}
\newtheorem{definition}[theorem]{Definition}
\newtheorem{example}[theorem]{Example}

\newtheorem{remark}[theorem]{Remark}

\newcommand{\bs}{\boldsymbol{\sigma}}
\newcommand{\be}{\boldsymbol{\varepsilon}}

\usepackage[style=iso]{datetime2} 
\usepackage{tikz-cd}
\usepackage[inline]{enumitem} 

\hypersetup{hidelinks}
\numberwithin{equation}{section}

\begin{document}

\title[Ore meets homothetic extensions]{Ore meets homothetic extensions}
\author[Brzezi\'nski]{Tomasz Brzezi\'nski}

\address{Department of Mathematics, Swansea University,
Fabian Way,
  Swansea SA1 8EN, U.K.\ \newline \indent
Faculty of Mathematics, University of Bia{\l}ystok, K.\ Cio{\l}kowskiego  1M,
15-245 Bia\-{\l}ys\-tok, Poland}

\email{T.Brzezinski@swansea.ac.uk}
\author[West]{A.T.M.\ West}
\address{Department of Mathematics, Swansea University,
Fabian Way,
  Swansea SA1 8EN, U.K.}

\email{a.t.m.west@swansea.ac.uk}
\begin{abstract}
    Sufficient and necessary conditions for an extension of a skew-derivation $(\delta_R,\alpha_R)$ of an associative $\mathbb{F}$-algebra $R$ to a skew derivation $(\delta_S,\alpha_S)$ on an extension $S$ of $R$ by $\mathbb{F}$   or a {\em homothetic extension $S$ of $R$ by $\mathbb{F}$} are derived. It is then shown that this yields the unique extension of  the Ore extension $R[x;\alpha_R,\delta_R]$ of $R$  by $\mathbb{F}$ that embeds in the Ore extension $S[x;\alpha_S,\delta_S]$ of $S$ by the extended skew-derivation.
\end{abstract}
\maketitle

\section{Introduction}
Throughout the text, $\mathbb{F}$ denotes a field or ring of integers $\mathbb{Z}$ and $R$ is an associative (but not necessarily unital or commutative) $\mathbb{F}$-algebra (so in the case of an algebra over $\mathbb{Z}$ it is simply an associative ring).  We consider an exact sequence of algebra homomorphisms
\begin{equation}\label{extension}
  \begin{tikzcd}
      0 \arrow[r] & R \arrow[r,"\iota"] & S \arrow[r,"\pi"] & \mathbb{F}\ar[r] &0.
  \end{tikzcd}  
\end{equation}
In other words, $S$  is an algebra extension of $R$ by $\mathbb{F}$.\footnote{In some contexts, in particular in that of homological algebra, one would rather say  that $S$ is an extension of $\mathbb{F}$ by $R$ in this situation. We use the terminology of \cite{Red:ver}.} Necessarily, $\iota(R)$ is an ideal in $S$ and we will identify $R$ with $\iota(R)$. As shown in \cite{AndBrzRyb:ext}, such extensions provide an equivalent (but purely ring-theoretic) description of abelian heaps \cite{Pru:the} with a distributing, associative multiplication known as {\em trusses} \cite{Brz:tru}, \cite{Brz:par} that bridges rings with (two-sided) {\em braces} \cite{Rum:bra}, \cite{CedJes:bra}. Our first aim is to address the following problem. Let $\alpha_R: R\to R$ be an algebra endomorphism and let $\delta_R$ be an $\alpha_R$-derivation or a skew derivation on $R$, that is a linear endomorphism of $R$ such that, for all $a,b\in R$,
\begin{equation}\label{skew.Leibniz}
    \delta_R(ab) = \delta_R(a)b + \alpha_R(a)\delta_R(b),
\end{equation}
(we encode these data as $(\alpha_R,\delta_R)$).
Determine when it is possible to extend $\alpha_R$ to an algebra endomorphism $\alpha_S$ of $S$ and $\delta_R$ to an $\alpha_S$-derivation $\delta_S$ of $S$ such that
\begin{equation}\label{deriv.ext}
 \alpha_S \iota = \iota \alpha_R \quad \mbox{and} \quad 
 \delta_S\iota = \iota \delta_R.
\end{equation}
We address this problem in Section~\ref{sec.deriv} and  solve it completely in Proposition~\ref{prop.alpha.ext} and Proposition~\ref{prop.deriv.ext}. In particular, we note that the extensions of $(\alpha_R,\delta_R)$ split into two distinct classes characterised by the scalar $\varsigma$ equal either to 0 or 1 (these are necessary for the zero endomorphism to be extended to the zero endomorphism in the former case and for the identity to be extended to the identity in the latter). We illustrate the solution of the extension problem by a number of examples. 

Once this first aim is achieved, we proceed to discuss Ore extensions of $S$ determined by extended skew derivations in Section~\ref{sec.Ore}. In particular in Theorem~\ref{thm.data.ext} and Proposition~\ref{prop.Ore-hom} we show that if $(\alpha_R,\delta_R)$ can be extended to a skew-derivation $(\alpha_S,\delta_S)$ with $\varsigma=1$, then there is a (unique) extension of $\mathbb{F}$ by the Ore extension $R[x;\alpha_R,\delta_R]$ of $R$ which embeds in the Ore extension $S[x;\alpha_S,\delta_S]$ yielding the following commutative  diagram of algebra maps with exact rows:
\begin{equation}\label{extension.2}
  \begin{tikzcd}
      0 \arrow[r] & R[x;\alpha_R,\delta_R] \arrow[r,"\tilde\iota"]\arrow[d,equal] & \tilde{S} \arrow[r,"\tilde\pi"] \arrow[hookrightarrow]{d} & \mathbb{F}\ar[r]\arrow[hookrightarrow]{d} &0\\
      0 \arrow[r] & R[x;\alpha_R,\delta_R] \arrow[r,"\iota'"] &  S[x;\alpha_S,\delta_S]\arrow[r,"\pi'"] & \mathbb{F}[x]\ar[r] &0,
  \end{tikzcd}  
\end{equation}
where the last vertical map is the inclusion of scalars.

\section{Extending skew derivations}\label{sec.deriv}

Taking inspiration from the groundbreaking works of Everett \cite{Eve:ext}, Redei \cite{Red:ver}, Mac Lane \cite{Mac:ext} and Petrich \cite{Pet:ide}, it was explained in \cite{AndBrzRyb:ext} that extensions \eqref{extension} can be equivalently and more explicitly described in terms of {\em homothetic data} which we define presently.

\begin{definition}[Double homothetism]\label{def.hom.datum}
    Let $\sigma=(\overleftarrow \sigma,\overrightarrow\sigma)$ be a double operator on $R$, that is, a pair of linear endomorphisms  
    $$
        \overrightarrow\sigma: R\rightarrow R,\quad a\mapsto\sigma a,\qquad \overleftarrow \sigma:R\rightarrow R,\quad a\mapsto a\sigma.
    $$ 
    Then $\sigma$ is a \textit{bimultiplication} \cite{Mac:ext} if, for all $a,b\in R$,
    \begin{equation}\label{eq.bimultiplication}
        \sigma(ab)=(\sigma a)b, \quad (ab)\sigma=a(b\sigma), \quad \text{and}\quad  a(\sigma b) = (a\sigma)b.
    \end{equation}
    A bimultiplication $\sigma$ is a \textit{double homothetism} \cite{Red:ver} if, for all $a\in R$,
    \begin{equation}
        \sigma(a\sigma)=(\sigma a)\sigma.
    \end{equation}
\end{definition}

Bimultiplications are also called bi-translations in \cite{Pet:ide}, multiplications in \cite{Hoc:coh} or (primarily in functional analysis) multipliers \cite{Hel:mul}, \cite{Bus:dou}. The first condition in \eqref{eq.bimultiplication} can be rewritten as $\overrightarrow\sigma(ab) = \overrightarrow\sigma(a)b$ and thus it means that $\overrightarrow\sigma$ is a right $R$-linear map. Similarly, the second condition in \eqref{eq.bimultiplication} means that $\overleftarrow\sigma$ is a left $R$-linear map. Bimultiplications form a unital, associative algebra with product given by opposite composition of left $R$-linear maps and composition of the right $R$-linear ones, that is, for $\sigma_1=(\overleftarrow\sigma_1,\overrightarrow\sigma_1)$ and $\sigma_2=(\overleftarrow\sigma_2,\overrightarrow\sigma_2)$, $\sigma_1\sigma_2 = (\overleftarrow\sigma_2\circ\overleftarrow\sigma_1,\overrightarrow\sigma_1\circ\overrightarrow\sigma_2)$. This algebra is known as the \emph{multiplier algebra of $A$} and is denoted by $M(A)$. A double homothetism is also known as a {\em self-permutable multiplication} \cite{Mac:ext}.

For any $a\in R$, the multiplication from the left and right by $a$ forms a double homothetism, denoted by $\bar{a}$,  that is, for all $b\in R$ 
$$\bar{a}b=ab \quad \text{and}\quad  b\bar{a}=ba.
$$
This is known as an {\em inner homothetism}.
\begin{definition}[Homothetic datum]
    A pair $(\sigma, s)$, where $s\in R$ and $\sigma$ is a double homothetism, is called a \textit{homothetic datum} provided that
    \begin{equation}\label{eq.hom.datum}
        \sigma s = s\sigma \quad \text{ and } \quad  \sigma^{2} = \sigma + \bar{s}.
    \end{equation}
\end{definition}
Given an algebra $R$ and a homothetic datum $(\sigma, s)$,  an extension $R(\sigma, s)$ of $\mathbb{F}$ by $R$ is constructed as follows. The algebra $R(\sigma, s)= R\oplus \mathbb{F}$ as an $\mathbb{F}$-module with the multiplication
\begin{equation}\label{eq.mult}
    (a,\xi)(b,\zeta) = (ab +\zeta a\sigma + \xi \sigma b +\xi\zeta s, \xi\zeta),
\end{equation}
for all $a,b\in R$ and $\xi,\zeta \in \mathbb{F}$. The algebra $R(\sigma, s)$ fits into an extension diagram \eqref{extension},
\begin{equation}\label{hom.extension}
  \begin{tikzcd}
      0 \arrow[r] & R \arrow[r,"\iota_{(\sigma,s)}"] & R(\sigma,s) \arrow[r,"\pi_{(\sigma,s)}"] & \mathbb{F}\ar[r] &0,
  \end{tikzcd}  
\end{equation}
with obvious inclusion  $\iota_{(\sigma,s)}: a\mapsto (a,0)$ and surjection $\pi_{(\sigma,s)}:(a,\xi)\mapsto \xi$. As explained in \cite[Theorem~3.6]{AndBrzRyb:ext}, since any sequence of the form \eqref{extension} is split-exact as a sequence of $\mathbb{F}$-modules, it gives rise to a (unique) homothetic datum such that extension \eqref{extension} is equivalent to \eqref{hom.extension}, that is, there exists a ring isomorphism $\Theta: R(\sigma,s) \to S$ rendering commutative the following diagram
$$
\begin{tikzcd}[column sep=huge,row sep=huge]
      R \arrow[r,"\iota_{(\sigma,s)}"]\arrow[d,"\iota"] & R(\sigma, s) 
      \arrow[d,"\pi_{(\sigma,s)}"] \arrow[dl,swap,"\Theta"] \\
      S \ar[r,"\pi"] & \mathbb{F} .
  \end{tikzcd}  
$$
In view of this equivalence from now on we consider {\em homothetic extensions} of $\mathbb{F}$ by $R$ as in \eqref{hom.extension}.

\begin{remark}\label{rem.hom}
  We note in passing that the homothetic datum conditions $(\sigma,s)$  mean that $R(\sigma,s)$ can be interpreted as the algebra $R \oplus \mathbb{F}\sigma$ with the multiplication induced by the $\mathbb{F}$-bilinearity, i.e.,
  $$
  (a+\xi\sigma)(b+\zeta\sigma) = ab +\xi\sigma b + \zeta a\sigma +\xi\zeta\sigma^2 =  ab +\xi\sigma b + \zeta a\sigma +\xi\zeta s +\xi\zeta\sigma .
  $$
  In other words $\sigma$ can be formally treated as an additional generator $\bs$ of $R(\sigma,s)$ satisfying the following rules, for all $a\in R$
  \begin{equation}\label{sigma.rules}
   a\bs = \overleftarrow\sigma(a), \quad \bs a = \overrightarrow\sigma(a), \quad \bs^2 = \bs +s, \quad \bs s = s\bs.  
  \end{equation}
In what follows we will use $a+\xi \bs$ as the more suggestive notation for $(a,\xi)$ in $R(\sigma,s)$ and we will identify $\iota$ with the canonical inclusion $a\mapsto a +0\bs$ of $R$ in $R\oplus  \mathbb{F}\bs$ and $\pi$ with the canonical projection $a+\xi\bs\mapsto \xi$.
\end{remark}

\begin{proposition}\label{prop.alpha.ext}
    Let $R$ be an algebra with a homothetic datum $(\sigma,s)$. Then an algebra endomorphism $\alpha_R$ of $R$ extends to an algebra endomorphism on $R(\sigma,s)$ satisfying the first condition in \eqref{deriv.ext} if and only if there exists $w\in R$, such that for all $a\in R$,
    \begin{rlist}
        \item $\alpha_R(s) - \varsigma s = w^2 +\varsigma w\sigma +\varsigma \sigma w -w$,
        \item $\alpha_R(a\sigma) = \alpha_R(a) (\varsigma\sigma +w)$,
        \item $\alpha_R(\sigma a) = (\varsigma\sigma +w)\alpha_R(a)$,
    \end{rlist}
    where $\varsigma$ is a scalar equal to either 0 or 1. 
\end{proposition}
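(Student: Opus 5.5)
Using the description of $R(\sigma,s)$ from Remark~\ref{rem.hom} as $R\oplus\mathbb{F}\bs$, any linear map $\alpha_S$ on $R(\sigma,s)$ with $\alpha_S\iota=\iota\alpha_R$ is determined by $\alpha_R$ together with the single value $\alpha_S(\bs)$. I write this value as $w+\varsigma\bs$ with $w\in R$ and $\varsigma\in\mathbb{F}$, so that
\[
\alpha_S(a+\xi\bs)=\alpha_R(a)+\xi w+\xi\varsigma\bs .
\]
The plan is to show that multiplicativity of this map is equivalent to (i)--(iii) together with $\varsigma\in\{0,1\}$. Since the multiplication on $R\oplus\mathbb{F}\bs$ is bilinear, it suffices to check $\alpha_S(xy)=\alpha_S(x)\alpha_S(y)$ on the generator pairs $(a,b)$, $(a,\bs)$, $(\bs,a)$ and $(\bs,\bs)$, with $a,b\in R$.

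\emph{Pair $(a,b)$.} Here $\alpha_S(ab)=\alpha_R(ab)=\alpha_R(a)\alpha_R(b)$, so this holds automatically because $\alpha_R$ is an algebra map.

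\emph{Pairs $(a,\bs)$ and $(\bs,a)$.} On the left, $\alpha_S(a\bs)=\alpha_R(a\sigma)$, since $a\sigma\in R$. On the right, $\alpha_R(a)(w+\varsigma\bs)=\alpha_R(a)w+\varsigma\,\alpha_R(a)\sigma$. Equating the two gives exactly (ii), and the pair $(\bs,a)$ gives (iii) symmetrically.

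\emph{Pair $(\bs,\bs)$.} This is the key step. By the rule $\bs^2=\bs+s$ we get $\alpha_S(\bs^2)=\alpha_R(s)+w+\varsigma\bs$. On the other side, using \eqref{sigma.rules},
\[
(w+\varsigma\bs)^2=w^2+\varsigma w\sigma+\varsigma\sigma w+\varsigma^2(\bs+s).
\]
Comparing the $\bs$-components gives $\varsigma=\varsigma^2$. Comparing the $R$-components gives
\[
\alpha_R(s)+w=w^2+\varsigma w\sigma+\varsigma\sigma w+\varsigma^2 s .
\]

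I expect the main subtlety to lie here, in justifying that $\varsigma^2=\varsigma$ forces $\varsigma\in\{0,1\}$. This needs $\mathbb{F}$ to have no zero divisors, which holds because $\mathbb{F}$ is a field or $\mathbb{Z}$. Once $\varsigma^2=\varsigma$ is used to replace $\varsigma^2 s$ by $\varsigma s$, the $R$-component equation rearranges to precisely (i).

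Conversely, suppose $w$ and $\varsigma\in\{0,1\}$ satisfy (i)--(iii). Define $\alpha_S$ by the displayed formula. The computations above, read backwards, show that $\alpha_S$ is multiplicative on all generator pairs, hence an algebra endomorphism extending $\alpha_R$. The relation $\bs s=s\bs$ imposes no further constraint, since it already holds in the algebra $R(\sigma,s)$ and is automatically preserved by any multiplicative map.

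If a unital endomorphism were required, one would additionally need $\alpha_S(\bs)=\bs$, i.e.\ $w=0$ and $\varsigma=1$; but this is not part of the statement.
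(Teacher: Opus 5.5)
Your proof is correct and follows essentially the same route as the paper: writing $\alpha_S(\bs)=w+\varsigma\bs$ and checking multiplicativity on the pairs $(a,b)$, $(a,\bs)$, $(\bs,a)$, $(\bs,\bs)$ is exactly the paper's comparison of coefficients at $(\xi,\zeta)=(0,0),(0,1),(1,0),(1,1)$, with the $\bs$-coordinate forcing $\varsigma^2=\varsigma$. One correction to your closing aside, which the statement does not need: $\bs$ is in general not the unit of $R(\sigma,s)$ (when a unit exists it has the form $u+\bs$ with $\bar u+\sigma=\mathrm{id}$), so unitality of $\alpha_S$ would force $\varsigma=1$ and $w=u-\alpha_R(u)$, not $w=0$.
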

\begin{proof}
    In view of \eqref{deriv.ext}, there exist $w\in R$ and $\varsigma\in \mathbb{F}$ such that
    \begin{equation}\label{eq.alphaS}
        \alpha_S(a +\xi\bs) = \alpha_R(a)+\xi w +\xi\varsigma\bs,
    \end{equation}
    for all $a\in R$ and $\xi\in\mathbb{F}$. The $\mathbb{F}$-linear endomorphism of $R(\sigma,s)$ is an algebra endomorphism if and only if
    \begin{equation}\label{alpha.comp}
    \begin{aligned}
    \alpha_R(ab) +\zeta \alpha_R(a\sigma) +\xi\alpha_R(\sigma b) &+\xi\zeta(\alpha_R(s) + w) 
    + \xi\zeta\varsigma\bs =\alpha_R(a)\alpha_R(b) +\zeta (\alpha_R(a)w +\varsigma\alpha_R(a)\sigma)\\
    &+\xi (w\alpha_R(b)  +\varsigma\sigma\alpha_R(b)) +\xi\zeta(w^2 +\varsigma w\sigma +\varsigma\sigma w+\varsigma^2s)  
         + \xi\zeta\varsigma^2\bs
        ,
    \end{aligned}
    \end{equation}
    for all $a,b\in R$ and $\xi,\zeta \in \mathbb{F}$.
   The coordinates at $\bs$ are equal for all $\xi,\zeta$ if and only if $\varsigma$ is an idempotent, and hence $\varsigma =0$ or $\varsigma =1$. Comparing the   sides of \eqref{alpha.comp} at $(\xi,\zeta) = (0,1), (1,0)$ one obtains conditions (ii)-(iii), and taking them into account and then comparing the sides at $(\xi,\zeta) = (1,1)$ one obtains (i). The converse is obvious. 
\end{proof}

By Proposition~\ref{prop.alpha.ext} there are two types of extensions, depending on the parameter $\varsigma$. To indicate the parameter taken we will denote the extended endomorphism by $\alpha_S^\varsigma$. Note that the identity endomorphism on $R(\sigma,s)$ is necessarily the extension of the identity endomorphism on $R$ of type 1, while the zero map is the extension (of the zero map) of type 0.

\begin{proposition}\label{prop.deriv.ext}
    Let $R$ be an algebra with a homothetic datum $(\sigma,s)$ and let $\alpha_R$ be an algebra endomorphism  of $R$ and $w\in R$ satisfying conditions of Proposition~\ref{prop.alpha.ext} so that there is a corresponding endomorphism $\alpha_S^\varsigma$ of $R(\sigma,s)$. An $\alpha_R$-skew derivation $\delta_R$ of $R$ extends to an $\alpha_S^\varsigma$-skew derivation $\delta_S^{\mu(\varsigma)}$ of $R(\sigma,s)$ satisfying conditions \eqref{deriv.ext} if and only if there exists $e\in R$ such that, for all $a\in R$,
    \begin{blist}
        \item $\delta_R(s) -\mu(\varsigma)s= e\sigma + \varsigma\sigma e +we+\mu(\varsigma)w\sigma -e$.
        \item $\delta_R(a\sigma) = \delta_R(a)\sigma +\alpha_R(a)(e+\mu(\varsigma)\sigma)$,
        \item $\delta_R(\sigma a) = (w+\varsigma\sigma)\delta_R(a) +(e+\mu(\varsigma)\sigma)a$,
    \end{blist}
    where $\mu(1) =0$ while $\mu(0)$ is any scalar. 
\end{proposition}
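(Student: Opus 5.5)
Mirroring the proof of Proposition~\ref{prop.alpha.ext}, I start from the observation that \eqref{deriv.ext} forces the general form of an extension. Since $\delta_S$ is $\mathbb{F}$-linear and restricts to $\delta_R$ on $R$, it is determined by its value on $\bs$, which I write as $\delta_S(\bs) = e + \mu\bs$ with $e\in R$ and $\mu\in\mathbb{F}$. Thus
$$
\delta_S(a+\xi\bs) = \delta_R(a) + \xi e + \xi\mu\bs .
$$
The task is then to decide when this linear map satisfies the skew Leibniz rule \eqref{skew.Leibniz} with respect to $\alpha_S^\varsigma$, which is given by \eqref{eq.alphaS}. By bilinearity it suffices to test the rule on pairs of the form $(a,b)$, $(a,\bs)$, $(\bs,a)$ and $(\bs,\bs)$ with $a,b\in R$.

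\emph{The pair $(a,b)$.} The rule holds automatically, because $\delta_R$ is an $\alpha_R$-derivation.

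\emph{The pair $(a,\bs)$.} Here $a\bs = a\sigma$, so the rule reads $\delta_R(a\sigma) = \delta_R(a)\bs + \alpha_R(a)(e+\mu\bs)$. Using \eqref{sigma.rules} to rewrite $x\bs$ as $x\sigma$ for $x\in R$, this is exactly (b) with $\mu$ in place of $\mu(\varsigma)$.

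\emph{The pair $(\bs,a)$.} We get $\delta_R(\sigma a) = \delta_S(\bs)a + \alpha_S(\bs)\delta_R(a) = (e+\mu\sigma)a + (w+\varsigma\sigma)\delta_R(a)$, which is (c).

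\emph{The pair $(\bs,\bs)$.} Since $\bs^2 = \bs + s$, the left-hand side is $\delta_S(\bs^2) = \delta_R(s) + e + \mu\bs$. The right-hand side expands as
$$
\delta_S(\bs)\bs + \alpha_S(\bs)\delta_S(\bs) = (e+\mu\bs)\bs + (w+\varsigma\bs)(e+\mu\bs),
$$
and I expand it with \eqref{sigma.rules}. The $R$-component is $e\sigma + \mu s + we + \mu w\sigma + \varsigma\sigma e + \varsigma\mu s$, and the $\bs$-coefficient is $\mu + \varsigma\mu$.

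The main point, and the only delicate step, is the $\bs$-coefficient. Comparing $\mu = \mu + \varsigma\mu$ gives $\varsigma\mu = 0$. So for $\varsigma = 1$ we must have $\mu = 0$, while for $\varsigma = 0$ the scalar $\mu$ is free; this is precisely the definition of $\mu(\varsigma)$ in the statement. Once $\varsigma\mu = 0$ is imposed, the term $\varsigma\mu s$ drops out of the $R$-component. Comparing $R$-components then gives $\delta_R(s) + e = e\sigma + \mu s + we + \mu w\sigma + \varsigma\sigma e$, which rearranges to (a).

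Finally, all four checks are equivalences. Hence conversely, given $e$ (and $\mu(\varsigma)$) satisfying (a)–(c), the displayed formula defines an $\alpha_S^\varsigma$-skew derivation extending $\delta_R$. I expect no real obstacle beyond keeping track of which products land in $R$ via $\overleftarrow\sigma$ and $\overrightarrow\sigma$, and recognising that $\varsigma\mu = 0$ is what produces the dichotomy for $\mu(\varsigma)$.
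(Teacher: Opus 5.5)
Your proposal is correct and is essentially the paper's proof. The paper expands both sides of the skew Leibniz rule for general $a+\xi\bs$ and $b+\zeta\bs$, reads off $\varsigma\mu(\varsigma)=0$ from the $\bs$-coefficient, and then specialises $(\xi,\zeta)$ to $(1,1)$, $(0,1)$, $(1,0)$; this is exactly your bilinear reduction to the pairs $(\bs,\bs)$, $(a,\bs)$, $(\bs,a)$, and your $(\bs,\bs)$ bookkeeping (including the $\varsigma\mu s$ term that disappears once $\varsigma\mu=0$) is accurate.
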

\begin{proof}
    Assume that $\delta_S$ is an $\alpha_S^\varsigma$-skew derivation on $R(\sigma,s)$ that satisfies  \eqref{deriv.ext}, so that  $\delta_S(a) = \delta_R(a)$, for all $a\in R$, and hence there exist $e\in R$ and $\mu(\varsigma)\in \mathbb{F}$ such that
    \begin{equation}\label{eq.deltaS}
        \delta_S(a+\xi\bs) = \delta_R(a) +\xi e +\xi\mu(\varsigma)\bs,
    \end{equation}
    for all $a\in R$ and $\xi\in \mathbb{F}$. Take any  $a+\xi\bs, b+\zeta\bs\in R(\sigma, s)$ and first compute,
    \begin{equation}\label{deriv.lhs}
        \begin{aligned}
            \delta_S\left((a+\xi\bs)(b+\zeta\bs)\right) &= \delta_R(ab) +\zeta\delta_R(a\sigma) +\xi \delta_R(\sigma b) + \xi\zeta(\delta_R(s) +e)+\xi\zeta\mu(\varsigma)\bs,
        \end{aligned}
    \end{equation}
    and then 
    \begin{equation}\label{deltaS.cond}
        \begin{aligned}
       \delta_S(a+\xi\bs)(b+\zeta\bs) &+  \alpha_S(a+\xi\bs)\delta_S(b+\zeta\bs) =\delta_R(a)b +\alpha_R(a)\delta_R(b) + \xi\zeta (e+\mu(\varsigma)w)\sigma +\varsigma\sigma e +\mu(\varsigma)s +we)\\
       + &\xi (eb +\mu(\varsigma) \sigma b+ w\delta_R(b) +\varsigma\sigma\delta_R(b)) + \zeta(\delta_R(a)\sigma +\alpha_R(a)(e+\mu(\varsigma)\sigma))+ \xi\zeta\mu(\varsigma)(1+\varsigma)\bs .
        \end{aligned}
   \end{equation}
   The map $\delta_S$ is an $\alpha_S^{\mu(\varsigma)}$-skew derivation if and only if the right hand side of \eqref{deriv.lhs} is equal to the right hand side of \eqref{deltaS.cond} for all $a,b,\xi,\zeta$. The comparison of the coefficients at $\bs$ yields the constraint $\varsigma\mu(\varsigma) = 0$, hence necessarily $\mu(1) = 0$ and there are no restrictions on $\mu(0)$. Once this is taken into account, 
conditions (a)-(c) are obtained by comparing the right hands sides of equations \eqref{deriv.lhs} and \eqref{deltaS.cond} for $a,b=0,(\xi,\zeta)= (1,1)$ and then for $(\xi,\zeta) = (0,1), (1,0)$. The converse is straightforward.
\end{proof}

\begin{definition}\label{def.comp}
    Given a homothetic datum $(\sigma, s)$ on $R$ a skew derivation $(\alpha_R,\delta_R)$ together with $w,e\in R$ satisfying conditions (i)-(iii) in Proposition~\ref{prop.alpha.ext} and (a)-(c) in Proposition~\ref{prop.deriv.ext} is called a {\em $(\sigma,s)$-compatible} skew derivation. We record all the data involved in a quintuple $(\alpha_R,w;\delta_R,e,\mu(\varsigma))$.
\end{definition}

Immediately from the proofs of Proposition~\ref{prop.alpha.ext} and Proposition~\ref{prop.deriv.ext} we obtain the following corollary.
\begin{corollary}\label{cor.deltaS}
    Let $\delta_S$ be an $\alpha_S$-skew derivation on a homothetic extension $R(\sigma,s)$ of an algebra $R$. If  $\alpha_S$ restricts to the map $\alpha_R: R\to R$ and $\delta_S$ restricts to the map $\delta_R:R\to R$, then $\alpha_R$ is an algebra homomorphism, $\delta_R$ is and $\alpha_R$-derivation and there exist $w, e\in R$ and $\varsigma,\mu(\varsigma)\in \mathbb{F}$ such that conditions (i)-(iii) in Proposition~\ref{prop.alpha.ext}  and (a)-(b) in  Proposition~\ref{prop.deriv.ext} are satisfied.
\end{corollary}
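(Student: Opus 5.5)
The plan is to reread the arguments of Proposition~\ref{prop.alpha.ext} and Proposition~\ref{prop.deriv.ext} in the opposite direction: start from maps $\alpha_S,\delta_S$ on $R(\sigma,s)$ and extract the data on $R$. Throughout I use the notation $a+\xi\bs$ of Remark~\ref{rem.hom}, the identification of $R$ with $\{a+0\bs\}$, and the fact that $R$ is an ideal in $R(\sigma,s)$.

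First I show that $\alpha_R$ and $\delta_R$ have the required properties. The hypothesis says that $\alpha_S(a)=\alpha_R(a)$ and $\delta_S(a)=\delta_R(a)$ for all $a\in R$, so both maps are $\mathbb{F}$-linear. For $a,b\in R$, the product in $R(\sigma,s)$ restricts to the product of $R$. Hence multiplicativity of $\alpha_S$ on $R$ gives $\alpha_R(ab)=\alpha_R(a)\alpha_R(b)$. Likewise, the skew Leibniz rule for $\delta_S$, applied to $a,b\in R$, is exactly \eqref{skew.Leibniz} for $(\alpha_R,\delta_R)$. So $\alpha_R$ is an algebra endomorphism and $\delta_R$ is an $\alpha_R$-derivation.

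Next I extract the parameters. Since $R(\sigma,s)=R\oplus\mathbb{F}\bs$, I can decompose $\alpha_S(\bs)=w+\varsigma\bs$ and $\delta_S(\bs)=e+\mu\bs$ for unique $w,e\in R$ and $\varsigma,\mu\in\mathbb{F}$. Linearity then forces $\alpha_S$ and $\delta_S$ to have the forms \eqref{eq.alphaS} and \eqref{eq.deltaS}. This is the only point where the hypothesis is used: it says that the ``restricted'' parts of $\alpha_S$ and $\delta_S$ really are the given maps $\alpha_R$ and $\delta_R$.

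From here the computations in the two earlier proofs apply verbatim. These computations only expand both sides of the homomorphism condition and the Leibniz condition, and they used nothing beyond the displayed forms. Identity \eqref{alpha.comp} must hold. Comparing the $\bs$-coefficients gives $\varsigma^2=\varsigma$. Comparing the components at $(\xi,\zeta)=(0,1),(1,0),(1,1)$ gives (i)--(iii). Equating \eqref{deriv.lhs} with \eqref{deltaS.cond} gives $\varsigma\mu=0$, so I may write $\mu=\mu(\varsigma)$. Specialising $(\xi,\zeta)$ as in the proof of Proposition~\ref{prop.deriv.ext} then yields (a) and (b), and in fact (c) as well.

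The one point that needs care is the claim $\varsigma\in\{0,1\}$. From $\varsigma^2=\varsigma$ this is automatic when $\mathbb{F}$ is a field or $\mathbb{Z}$, since these have no nontrivial idempotents. Beyond that, I expect no real obstacle; the corollary is bookkeeping, consisting of checking that the earlier derivations of the necessary conditions never used that $\alpha_R,\delta_R$ were assumed in advance to be a homomorphism and derivation.
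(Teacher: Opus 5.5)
Your proposal is correct and follows the paper's own proof. In both, $w,\varsigma,e,\mu$ are read off from $\alpha_S(\bs)$ and $\delta_S(\bs)$, the maps $\alpha_S,\delta_S$ are observed to take the forms \eqref{eq.alphaS} and \eqref{eq.deltaS}, and the necessity halves of Propositions~\ref{prop.alpha.ext} and~\ref{prop.deriv.ext} are invoked; your extra remarks make explicit what the paper leaves implicit (that those computations do not presuppose the properties of $\alpha_R,\delta_R$, that $\varsigma^2=\varsigma$ forces $\varsigma\in\{0,1\}$ over a field or $\mathbb{Z}$, and that (c) also holds).
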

\begin{proof}
    Since $\alpha_R(a)=\alpha_S(a)$ and $\delta_R(a)=\delta_S(a)$, for all $a\in R$, it is clear that $\alpha_R$ is an algebra homomorphism and $\delta_R$ is an $\alpha_R$-derivation of $R$. The parameters $w,e,\varsigma,\mu(\varsigma)$ are determined by
    $$
    w+\varsigma\bs = \alpha_S(\bs) \quad \mbox{and}\quad  e+\mu(\varsigma)\bs = \delta_S(\bs),
    $$ 
    so that
    $\alpha_S$ is given by \eqref{eq.alphaS} and $\delta_S$ is given by \eqref{eq.deltaS}, and hence $\eqref{deriv.ext}$ are satisfied. The assertion follows by Proposition~\ref{prop.alpha.ext}  and   Proposition~\ref{prop.deriv.ext}.
\end{proof}

\begin{example}[Homothetic derivations]\label{ex.inner}
    With any element $b+\zeta\bs$ of $R(\sigma,s)$ one can associate an inner $\alpha_S$-skew derivation. If $\alpha_S$ restricts to an endomorphism $\alpha_R$ of $R$, i.e.\ $\alpha_S(a+\xi\bs) = \alpha_R(a)+\xi w+\xi\varsigma\bs$, for some $w\in R$, satisfying conditions (i)--(iii) in Proposition~\ref{prop.alpha.ext}, then the inner derivation comes out as
    $$
    \begin{aligned}
        \delta_S^{b+\zeta\bs}(a+\xi\bs) &= \alpha_S(a+\xi\bs)(b+\zeta\bs) - (b+\zeta\bs)(a+\xi\bs) \\
        & = \alpha_R(a)b - ba +\zeta(\alpha_R(a)\sigma - \sigma a) + \xi (\varsigma\sigma b - b\sigma +wb) +\xi\zeta (w\sigma +(\varsigma-1)s) + \xi\zeta (\varsigma-1)\bs.
    \end{aligned}
    $$
    The restriction of $\delta_S^{b+\zeta\bs}$ to $R$ gives the corresponding derivation of $R$
    \begin{equation}\label{inner.rest}    
    \delta_R^{b+\zeta\bs}(a) = \alpha_R(a)b - ba +\zeta(\alpha_R(a)\sigma - \sigma a).      
    \end{equation}
    In particular $\delta_R^{b}$ is the inner $\alpha_R$-skew derivation, whereas
    \begin{equation}\label{eq.hom.der}
        \delta_R^{\bs}(a) = \alpha_R(a)\sigma - \sigma a,
    \end{equation}
    is not.
     We will refer to the derivation \eqref{eq.hom.der} as a {\em homothetic skew derivation}. Note in passing that $\delta_R^{\bs}(a)$ remains a skew derivation even if $\sigma$ is merely a bi-multiplication.
    
    Note further that
    $$
    \delta_S^{b+\zeta\bs}(\bs) =  \varsigma\sigma b - b\sigma +wb +\zeta (w\sigma +(\varsigma-1)s) + \zeta (\varsigma-1)\bs.
    $$
    Set
    $$
    e = \varsigma\sigma b - b\sigma +wb +\zeta (w\sigma +(\varsigma-1)s), \quad \mu(\varsigma)=\zeta (\varsigma-1).
    $$
    A straightforward though tedious calculation that uses the fact that $w$ satisfies conditions (i)--(iii) in Proposition~\ref{prop.alpha.ext} confirms that $e$ satisfies conditions (a)--(c) in Proposition~\ref{prop.deriv.ext} in respect to the $\alpha_R$-derivation $\delta_R^{b+\zeta\bs}$ of equation \eqref{inner.rest} (thus confirming the general statement of Corollary~\ref{cor.deltaS} in this case). Furthermore, $\mu(1)=0$, $\mu(0) = -\zeta$. Therefore, the inner $\alpha_S$-derivation $\delta_S^{b+\zeta\bs}$ extends the $\alpha_R$-derivation $\delta_R^{b+\zeta\bs}$. 
    \end{example}
\begin{example}[Extension of rings with zero multiplication]\label{ex.zero}
    Let $R$ be a ring  with the zero multiplication. As shown in \cite[Theorem~6.5]{AndBrzRyb:ext}, up to (weak) equivalence, homothetic ring extensions of $R$ are in one-to-one correspondence with ordered four-fold direct sum decompositions of the abelian group $R = A_1\oplus A_2\oplus A_3\oplus A_4$. Let $\pi_i: R\to A_i$ and $\omega_i:A_i\to R$ be the orthogonal surjections and injections corresponding to the direct sum decomposition, i.e. 
    \begin{equation}\label{orth}
        \pi_i\omega_j = \begin{cases}
        \mathrm{id}:A_i\to A_i, & i=j\,,\cr 0 & i\neq j\, .
    \end{cases}
    \end{equation}
    For an element $a\in R$ we will write $a=a_1+a_2+a_3+a_4$ with $a_i\in A_i$. As explained in \cite[Remark~6.4]{AndBrzRyb:ext} with no loss of generality one can choose the homothetic datum $(\sigma, s)$ corresponding to the decomposition $(A_1,A_2,A_3,A_4)$ with $s=0$ and
    \begin{equation}\label{eq.sigma.zero}
        \overrightarrow\sigma  = \omega_1\pi_1 + \omega_3\pi_3, \qquad \overleftarrow\sigma = \omega_2\pi_2+\omega_3\pi_3.
    \end{equation}
    With this choice the multiplication in $R(\sigma, 0)=R\oplus \mathbb{Z}$ comes out as
    $$
    (a+k\bs)(b+l\bs) = k b_1 + la_2 + la_3+kb_3+ kl\bs.
    $$
    Any additive endomorphism $\alpha$ of $R$ is both a ring endomorphism and a skew-derivation and it is fully described by sixteen additive maps $\alpha_{ij} = \pi_j\alpha\omega_i: A_i \to A_j$, $i,j=1,2,3,4$. In other words $\alpha = \sum_{i,j} \omega_j\alpha_{ij}\pi_i$.

    Condition (i) in Proposition~\ref{prop.alpha.ext} is equivalent to 
    $$ (1-\varsigma)w_1 +(1-\varsigma)w_2 +(1-2\varsigma)w_3 + w_4 = 0,
    $$
    which can be resolved as $w=\varsigma(w_1+w_2)$. Properties (ii) and (iii) yield
    $$
    \begin{aligned}
        \alpha\omega_2\pi_2 +\alpha\omega_3\pi_3 &= \varsigma \left(\omega_2\pi_2\alpha +\omega_3\pi_3\alpha\right) \quad\mbox{and}\quad
        \alpha\omega_1\pi_1 +\alpha\omega_3\pi_3 &= \varsigma \left(\omega_1\pi_1\alpha +\omega_3\pi_3\alpha\right).
    \end{aligned}
    $$
    In view of the orthogonality conditions \eqref{orth} these yield
    $$
    \delta_{2i}\alpha_{2j}+\delta_{3i}\alpha_{3j} = \varsigma\left(\delta_{2j}\alpha_{i2}+\delta_{3j}\alpha_{i3}\right) \quad \mbox{and} \quad \delta_{1i}\alpha_{1j}+\delta_{3i}\alpha_{3j} = \varsigma\left(\delta_{1j}\alpha_{i1}+\delta_{3j}\alpha_{i3}\right) ,
    $$
    for all $i,j=1,\ldots, 4$.
      Consequently, 
$$
\alpha = \varsigma\sum_{i=1}^4\omega_i\alpha_{ii}\pi_i + (1-\varsigma)\sum_{j=1}^4\omega_j\alpha_{4j}\pi_4,
$$
or in components, writing $a_{ij}\in A_{j}$ for $\alpha_{ij}(a_i)$, $a_i\in A_i$,
$$
\alpha(a_1,a_2,a_3,a_4) = (\varsigma a_{11} +(1-\varsigma)a_{41},\varsigma a_{22} +(1-\varsigma)a_{42},\varsigma a_{33} +(1-\varsigma)a_{43}, a_{44}).
$$
These are necessary and sufficient conditions for a general endomorphism $R$ to admit an extension to a ring endomorphism $\alpha_S^\varsigma$ of $R(\sigma,0)$, and, for all $a\in R$ and $k\in \mathbb{Z}$,
$$
\begin{aligned}
    \alpha_S^\varsigma(a+k\bs) &= 
\varsigma\sum_{i=1}^4\omega_i\alpha_{ii}\pi_i(a) + (1-\varsigma)\sum_{j=1}^4\omega_j\alpha_{4j}\pi_4(a)
+ k\varsigma(w_1+w_2)+ k\varsigma\bs\\
&=(\varsigma (a_{11}+kw_1) +(1-\varsigma)a_{41},\varsigma (a_{22}+kw_2) +(1-\varsigma)a_{42},\varsigma a_{33} +(1-\varsigma)a_{43}, a_{44})+ k\varsigma\bs.
\end{aligned}
$$

If $\delta$ is an $\alpha$-skew derivation of $R$ (i.e.\ any additive endomorphism of $R$) we write $d_{ij} = \pi_j\delta\omega_i :A_i\to A_j$. The condition (a) in Proposition~\ref{prop.deriv.ext} yields $e= \varsigma e_1 +e_2 +(1-\varsigma) e_3$.  Conditions (b), (c)  come out as
$$
\begin{aligned}
        \delta\omega_2\pi_2 +\delta\omega_3\pi_3 &=  \omega_2\pi_2\delta +\omega_3\pi_3\delta +\mu(\varsigma)\left(\omega_2\pi_2\alpha +\omega_3\pi_3\alpha\right),\\  
        \delta\omega_1\pi_1 +\delta\omega_3\pi_3 &= \varsigma \left(\omega_1\pi_1\delta +\omega_3\pi_3\delta\right) + \mu(\varsigma) \left(\omega_1\pi_1 +\omega_3\pi_3\right).
    \end{aligned}
$$
These can be resolved in the way similar to (ii)-(iii) in Proposition~\ref{prop.alpha.ext} to give
$$
\begin{aligned}
    \delta &= \omega_2d_{22}\pi_2+\omega_4d_{44}\pi_4+ \varsigma(\omega_1d_{11}\pi_1+\omega_3d_{33}\pi_3)\\
    &+(1-\varsigma)\left(\omega_3d_{23}\pi_2 + \omega_1d_{41}\pi_4\right)+\mu(\varsigma)(\omega_1\pi_1 +\omega_3\pi_3-\omega_2\alpha_{42}\pi_4 - \omega_3\alpha_{43}\pi_4).
\end{aligned}
$$
Thus, explicitly, if we write $a^\delta_{ij} = d_{ij}(a_i)$, the extended derivation in components reads
$$
\delta_S^{\mu(\varsigma)}\left(
\begin{pmatrix}
    a_1\cr a_2\cr a_3\cr a_4
\end{pmatrix} +k\bs\right) = \begin{pmatrix}
    \varsigma (a^\delta_{11}  +ke_1) +(1-\varsigma)a^\delta_{41}+\mu(\varsigma)a_{1} \cr
    a^\delta_{22}-\mu(\varsigma)a_{42} + k e_2 \cr
    \varsigma a^\delta_{33} + (1-\varsigma)(a^\delta_{23} +ke_3) + \mu(\varsigma)(a_3-a_{43} )\cr
    a^\delta_{44}
\end{pmatrix}
+k\mu(\varsigma)\bs.
$$
\end{example}
\begin{example}[Non-degenerate algebras $\daleth_n$] An algebra $A$ is said to be \emph{non-degenerate} if, for all $a\in A$,  $aA =0$ implies $a=0$ and $Aa=0$ implies that $a=0$. In other words, an algebra $A$ is  non-degenerate if and only if it has the zero annihilator, i.e.\
$$
\mathfrak{a}(A) := \{a\in A \;|\; ab=ba=0, \; \mbox{for all $b\in A$}\} = 0.
$$
Non-degenerate algebras are particularly important in the $C^*$-algebra theory. Note that in the case of a non-degenerate algebra, the  map
$$
A\longrightarrow M(A), \qquad a \longmapsto \bar{a},
$$
is a monomorphism of algebras. The image $\bar A$ of this map is an ideal in $M(A)$, the quotient algebra $M(A)/{\bar A}$ is known as the \emph{corona algebra} of $A$.

As shown in \cite[Section~7]{AndBrzRyb:ext}, every multiplier of a non-degenerate algebra  is automatically a double homothetism and up to  equivalence homothetic extensions of $A$ are in one-to-one correspondence with idempotents in the corona algebra of $A$.

The following example is based on the description of the multiplier algebra in \cite[Example~1.20]{VanVer:mul}. Let $\daleth_n$ denote the algebra generated by $n\times n$ unit matrices $e_{1i}, e_{in}$, $i=1,\ldots, n-1$, that is the subalgebra of $n\times n$-matrices with entries from $\mathbb{F}$ of the form
\begin{equation}\label{eq.elements}
  x = \sum_{i=1}^n{\xi_i}e_{1i} + \sum_{i=2}^n{\xi^i}e_{in}=
\begin{pmatrix}
    \xi_1 & \xi_2 & \ldots & \xi_{n-1} & \xi_n \\
    0 & 0& \ldots &  0 & \xi^2\\
    \ldots & \ldots & \ldots &  \ldots & \ldots \\
    0 & 0 &\ldots & 0 & \xi^n
\end{pmatrix}
,
\end{equation}
where $\xi_i,\xi^i\in \mathbb{F}$. This algebra is non-degenerate, since if
$$
0 = e_{11} x =\sum_{i=1}^n{\xi_i}e_{1i}, 
$$
then $\xi_i=0$, for all $i=1,\ldots, n$ and if 
$
0 = e_{1i}x = \xi^i e_{1n},
$ then $\xi^i=0$. Thus the algebra $\daleth_n$  is left non-degenerate. The right non-degeneracy is shown by similar arguments. 

Since $\daleth_n$ satisfies assumptions of \cite[Theorem~7.2]{AndBrzRyb:ext},  equivalence classes of homothetic extensions of $\daleth_n$ are determined by its idempotent multipliers. For any $k=2,\ldots, n-1$, set $\varepsilon_k$ to be the multiplications by $e_{kk}$. Hence, for all $x$ as in equation \eqref{eq.elements},
$$
\varepsilon_kx = \xi^k e_{kn}, \qquad x\varepsilon_k = \xi_ke_{1k}.
$$
Obviously, $\varepsilon_k$ are idempotent multipliers which give non-zero and mutually different elements of the corona algebra of $\daleth_n$. One easily checks that the map
$$
x+\xi\be_k \mapsto x + \xi e_{kk},
$$
is an algebra isomorphism of $\daleth_n(\varepsilon_k, 0)$ and $\daleth_n \oplus \mathbb{F}e_{kk}\subset M_n(\mathbb{F})$, i.e.\ the subalgebra of $M_n(\mathbb{F})$ generated by $e_{1i}, e_{in}$, $i=1,\ldots, n$ and $e_{kk}$. In particular, for $n=3$,  $\daleth_3(\varepsilon_2, 0)$ is (isomorphic to) the multiplier algebra of $\daleth_3$, as explained in \cite[Example~1.20]{VanVer:mul}. Since $\varepsilon_2$ is the unique noninner double homothetism of $\daleth_3$, all homothetic extensions of $\daleth_3$ are equivalent to $\daleth_3(\varepsilon_2, 0)$. For $n>3$ the homothetic extensions $\daleth_n(\varepsilon_k, 0)$ are strict subalgebras of $M(\daleth_n)$.
\end{example}

\begin{example}[Extensions of the zero endomorphism to $\daleth_n(\varepsilon_k, 0)$]\label{ex.ext.0.mor}
    
As an illustration of Proposition~\ref{prop.alpha.ext} we can consider extensions of the zero endomorphism on $\daleth_n$ to homomorphisms $\theta^\varsigma$ of $\daleth_n(\varepsilon_k, 0)$.  The conditions (ii)--(iii) are automatically satisfied, so all extensions are determined by condition (i) (with $s=0$). In the case $\varsigma =0$, condition (i) simply means that $w$ is an idempotent, and hence
\begin{equation}\label{zero.ext.0}
    \theta^0(e_{kk}) = pe_{11} + qe_{nn}, \qquad (p,q)\in \{(0,0),(0,1),(1,0), (1,1)\}.
\end{equation}

In the case $\varsigma=1$ (and hence $\mu(\varsigma)=0$), setting 
$$
w = \sum_{i=1}^n{\upsilon_i}e_{1i} + \sum_{i=2}^n{\upsilon^i}e_{in},
$$
and comparing at the elements of the basis for $\daleth_n$ (the unit matrices) the condition (i) yields
\begin{equation}\label{daleth.cond}
    \upsilon_i = \upsilon_i \upsilon_1, \quad \upsilon^j = \upsilon^j \upsilon^n, \quad \upsilon_k\upsilon_1 = \upsilon^k\upsilon^n = 0, \quad \upsilon_n = \upsilon_1\upsilon_n + \sum_{j=2}^n \upsilon_j\upsilon^j,
\end{equation} 
for all $i=1,\ldots,k,k+1,\ldots, n-1$ and $j =2,\ldots,k,k+1,\ldots, n$. In particular the first two equalities in \eqref{daleth.cond} imply that $\upsilon_1= 0,1$ and $\upsilon^n = 0,1$. Thus also for $\varsigma=1$ there are four possible families of endomorphisms of $\daleth_n(\varepsilon_k,0)$ that restrict to 0 on $\daleth_n$. These come out as:
\begin{subequations}\label{fam.daleth}
    \begin{equation}\label{fam.daleth.1}
        \theta^1(e_{kk}) =  \upsilon e_{1k}+ e_{kk},
    \end{equation}
    \begin{equation}\label{fam.daleth.2}
        \theta^1( e_{kk}) = e_{11} + \sum_{j=2}^n\upsilon_j e_{1j} + e_{kk},
    \end{equation}
    \begin{equation}\label{fam.daleth.3}
        \theta^1( e_{kk}) =  \upsilon_n e_{1n}+\sum_{j=2}^{n-1}\upsilon^j e_{jn} + e_{kk},
    \end{equation}
    \begin{equation}\label{fam.daleth.4}
        \theta^1( e_{kk}) = e_{11} + \sum_{j=2,\, j\neq k}^{n-1}(\upsilon_j e_{1j} - \upsilon_j\upsilon^je_{1n} + \upsilon^j e_{jn}) +e_{nn}+ e_{kk}.
    \end{equation}
\end{subequations}
Of course, $\theta^\varsigma(e_{1i}) = \theta^\varsigma(e_{in}) =0$, for all $i=1,\ldots, n$.
\end{example}

\begin{example}[Extensions of right $\daleth_n$-linear endomorphisms to $\theta^0$-derivations on $\daleth_n(\varepsilon_k, 0)$]
    Since the right $R$-module endomorphisms of $R$ are the same as the $0$-skew derivations of $R$, Proposition~\ref{prop.deriv.ext} can be applied to compute their extensions to $S$. One easily finds that any right $\daleth_n$-module endomorphism of $\daleth_n$ is necessarily of the form
    \begin{equation}\label{r-lin}
       \delta_R(e_{1i}) = \gamma_1e_{1i}, \qquad \delta_R(e_{jn}) = \gamma_je_{jn},
    \end{equation}
    for all $i=1,\ldots n$, $j=2,\ldots,n$, and $\gamma_i\in \mathbb{F}$.

    Let $w=pe_{11}+qe_{nn}$ as in \eqref{zero.ext.0} in Example~\ref{ex.ext.0.mor}. Since $\varsigma =0$, $s=0$, and $k\neq 1,n$, the condition (a) in Proposition~\ref{prop.deriv.ext} reduces to
    $$
    e = ee_{kk}+(pe_{11}+qe_{nn})e, 
    $$
    which can be solved to give
    \begin{equation}\label{e.zero.0}
        e = p\sum_{i=1, i\neq k}^n\nu_i e_{1i}+(1+p)\nu_k e_{1k} + q\nu^ne_{nn},
    \end{equation}
    where $e= \sum_i^n \nu_ie_{1i} + \sum_{j=2}^n\nu^j e_{jn}$. The parameters $\nu_i, \nu^n$ are determined from condition (c) (the condition (b) is automatically satisfied for $\delta_R$ in \eqref{r-lin}). Writing $\mu:=\mu(0)$, one easily finds that
    $$
    p\nu_i =
    \begin{cases} 
    -p\gamma_1 - \mu & i=1,\\
-\mu & i\in \{2,\ldots ,n-1\},\; i\neq k\\
0 & i=n
    \end{cases}, \quad (1-p)\nu_k = \gamma_k -\mu, \quad q\nu^n = -q\gamma_n -\mu .
    $$
    Note, in particular that $\mu =0$ whenever $pq=0$. Inserting these equalitites in \eqref{e.zero.0} we thus find
    $$
    \begin{aligned}
        \delta_S^\mu(a+\xi\bs) = \delta_R(a) &+ \xi\begin{cases}
        -p\gamma_1e_{11} + \gamma_ke_{1n} - q\gamma_n e_{nn} & \mbox{if}\; pq=0,\cr
        -(\gamma_1+\mu)e_{11} - \mu\sum_{j=2,j\neq k}^{n-1}e_{1j} + (\gamma_k-\mu)e_{1k} - (\gamma_n+\mu)e_{nn} & p=q=1 
    \end{cases}\\
    &+ \mu\xi\bs.
    \end{aligned}
    $$
\end{example}

\section{Ore vs homothetic extensions}\label{sec.Ore}

Although the theory of Ore extensions of unital rings has been extensively studied and it is by now very well understood (see e.g.\ \cite{MacRob:Noe}, \cite{GooWar:non}), Ore extensions of nonunital rings are far less explored. To the best of our knowledge the first comprehensive studies of such extensions have been attempted in \cite{BacRicSil:hom} and \cite{LunOinRic:non}.  

Let $\delta_R$ be an $\alpha_R$-skew derivation on an $\mathbb{F}$-algebra $R$. Then a free left $R$-module 
\begin{equation}
  R\oplus \bigoplus_{i\in \mathbb{N}} Rx^i = \{a_0 + \sum_{i=1}^n a_ix^i |n\geq 1, a_i \in R\},
\end{equation}
can be made into an associative algebra in which the canonical inclusion $R\hookrightarrow R\oplus \bigoplus_{i\in \mathbb{N}} Rx^i$, $a\mapsto a\oplus \bigoplus_{i\in \mathbb{N}} 0x^i$, is an algebra monomorphism and with the following multiplication, for all $a,b\in R$, $n,m>0$ 
\begin{equation}\label{eq.Ore.mult}
    a(bx^n) = ab x^n, \quad (ax^m)(bx^n) = \sum_{i=0}^m a\Gamma
[\alpha_R,\delta_R]^m_i(b)x^{n+i},  \quad (ax^m)b = \sum_{i=0}^m a\Gamma
[\alpha_R,\delta_R]^m_i(b)x^{i},
\end{equation}
 where $\Gamma
[\alpha_R,\delta_R]^m_i: R\to R$ denotes the linear map obtained as the sum of all possible compositions of $i$ copies of $\alpha_R$ with $m-i$ copies of $\delta_R$. The resulting algebra is known as  an \textit{Ore extension} of $R$ and it is denoted by $R[x; \alpha_R,\delta_R]$.

A few remarks seem to be in order now. First, note that since $R$ is not necessarily a unital algebra, the generators $x^i$ are not necessarily elements of $R[x; \alpha_R,\delta_R]$. Nethertheless the notation $x^i$ is suggestive of taking powers of $x$ in $R[x; \alpha_R,\delta_R]$, and taking this interpretation of $x^i$ seriously while keeping the aforementioned caveat in mind, the two latter rules in \eqref{eq.Ore.mult} can be deduced from the single rule
$$
axb = a\alpha_R(b)x + a\delta_R(b).
$$
Second, although $x$ in not necessarily an element of $R[x; \alpha_R,\delta_R]$, the multiplications by $x$ from left and right are well defined operations which combine into a double homothetism 
$$
\begin{aligned}
   ax^n\bar x &= ax^{n+1}, \qquad \bar xax^n = xax^n = \alpha_R(a)x^{n+1} + \delta_R(a)x^n,\\
   a\bar x &= ax, \qquad \bar xa = xa = \alpha_R(a)x + \delta_R(a),
\end{aligned}
$$
for all $a\in R$. Third and finally, to make the notation more intuitive we will write $ax^0$ for $a$.

\begin{theorem}\label{thm.data.ext}
    Let $(\sigma, s)$ be a homothetic datum on an algebra $R$ and let $(\alpha_R,w;\delta_R, e,\mu(1))$ be a compatible skew derivation of type 1. Then the datum $(\sigma, s)$ extends to the unique homothetic datum $(\tilde\sigma, s)$ on the Ore extension $R[x;\alpha_R,\delta_R]$ such that
    \begin{equation}\label{eq.extended.bim}
        \tilde{\sigma} = \sigma   \quad \mbox{and} \quad \bar x\tilde{\sigma} = (\bar w +\tilde\sigma)\bar x +\bar e,
    \end{equation}
    on $R$. 
\end{theorem}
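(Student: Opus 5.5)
The plan is to realise the extended datum inside the Ore extension of $S=R(\sigma,s)$. By Propositions~\ref{prop.alpha.ext} and~\ref{prop.deriv.ext}, a compatible skew derivation of type 1 gives an endomorphism $\alpha_S=\alpha_S^1$ and an $\alpha_S$-skew derivation $\delta_S$ of $S$ (with $\mu(1)=0$) such that
$$
\alpha_S(\bs)=w+\bs, \qquad \delta_S(\bs)=e .
$$
Both maps restrict to $\alpha_R,\delta_R$ on $R$, and $R$ is an ideal of $S$. Hence $R[x;\alpha_R,\delta_R]$ sits inside $S[x;\alpha_S,\delta_S]$ as the set of polynomials with coefficients in $R$. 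I would first check that it is a two-sided ideal there. This follows from the multiplication rules \eqref{eq.Ore.mult}: every term $a\Gamma[\alpha_S,\delta_S]^m_i(b)$ lies in $R$ as soon as $a\in R$ or $b\in R$, because $\Gamma[\alpha_S,\delta_S]^m_i$ preserves $R$ and $R$ is an ideal of $S$.

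Next, define $\tilde\sigma$ as left and right multiplication by $\bs\in S\subset S[x;\alpha_S,\delta_S]$, restricted to the ideal $R[x;\alpha_R,\delta_R]$. Associativity of $S[x;\alpha_S,\delta_S]$ then gives at once:
\begin{itemize}
\item the bimultiplication identities \eqref{eq.bimultiplication};
\item the self-permutability $\tilde\sigma(p\tilde\sigma)=(\tilde\sigma p)\tilde\sigma$;
\item the relations $\tilde\sigma s=s\tilde\sigma$ and $\tilde\sigma^2=\tilde\sigma+\bar s$, inherited from $\bs s=s\bs$ and $\bs^2=\bs+s$ in $S$ (see \eqref{sigma.rules}).
\end{itemize}
So $(\tilde\sigma,s)$ is a homothetic datum. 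On $R$ it agrees with $\sigma$ by \eqref{sigma.rules}. The Ore relation $x\bs=\alpha_S(\bs)x+\delta_S(\bs)=(w+\bs)x+e$ in $S[x;\alpha_S,\delta_S]$ yields exactly \eqref{eq.extended.bim}. Explicitly, the two actions are
$$
\tilde\sigma(ax^n)=(\sigma a)x^n, \qquad (ax^n)\tilde\sigma=\sum_{i}a\,\Gamma[\alpha_S,\delta_S]^n_i(\bs)\,x^i .
$$
Each coefficient lies in $R$, since $a\bs=a\sigma$ and $a$ times an element of $R$ lies in $R$.

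For uniqueness, suppose $\tilde\sigma'$ is any bimultiplication satisfying \eqref{eq.extended.bim}. Right $R[x]$-linearity of $\overrightarrow{\tilde\sigma'}$ forces $\tilde\sigma'(ax^n)=(\sigma a)x^n$. For the right action, write $ax^n=(ax^{n-1})\bar x$ and use the composition rule in the multiplier algebra together with \eqref{eq.extended.bim}:
$$
(ax^n)\tilde\sigma' = (ax^{n-1})\bigl((w+\tilde\sigma')\bar x+\bar e\bigr).
$$
Expanding the right-hand side gives $(ax^{n-1})w\,x + \bigl((ax^{n-1})\tilde\sigma'\bigr)x + (ax^{n-1})e$, whose only occurrence of $\tilde\sigma'$ is its right action on $ax^{n-1}$. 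So the right action on $ax^n$ is determined by induction on $n$ from its value $a\sigma$ on $R$. Hence $\tilde\sigma'=\tilde\sigma$. The element $s$ is unchanged, since the datum must extend $(\sigma,s)$.

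I expect the main obstacle to be the legitimacy of working inside $S[x;\alpha_S,\delta_S]$. One needs this algebra to be associative for a possibly nonunital $S$, and needs $\bs$ together with $x$-multiplication to interact as claimed. I would invoke the associativity of nonunital Ore extensions from \cite{LunOinRic:non}. If a self-contained check is wanted, I would instead verify directly from the explicit formula for $(ax^n)\tilde\sigma$ that the two-sided compatibility $p(\tilde\sigma q)=(p\tilde\sigma)q$ holds. I would do this by induction on degrees, using conditions (ii), (iii), (b) and (c).
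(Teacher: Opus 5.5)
Your construction of $\tilde\sigma$ is the paper's. In both, $R[x;\alpha_R,\delta_R]$ is realised as an ideal of $R(\sigma,s)[x;\alpha_S,\delta_S]$, and $\tilde\sigma$ is left and right multiplication by $\bs$. The homothetic-datum axioms then follow from associativity together with $\bs^2=\bs+s$ and $\bs s=s\bs$, and \eqref{eq.extended.bim} follows from $\alpha_S(\bs)=w+\bs$ and $\delta_S(\bs)=e$. The paper gets the ideal property from the kernel of the algebra map $\pi'$ and the $\mathbb{F}$-splitting $p$; your direct coefficient check is equivalent. The existence half is fine.

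The uniqueness half does not work, and the paper's proof does not address uniqueness at all. Your step ``right $R[x]$-linearity forces $\tilde\sigma'(ax^n)=(\sigma a)x^n$'' is invalid. For nonunital $R$, $x^n$ is not an element of $R[x;\alpha_R,\delta_R]$, so $ax^n$ is not of the form $a\cdot q$, and $\tilde\sigma'(pq)=(\tilde\sigma' p)q$ says nothing about it. You would need $\tilde\sigma'(p\bar x)=(\tilde\sigma' p)\bar x$, which is neither an axiom nor a consequence of \eqref{eq.extended.bim}. Your induction for the right action also applies \eqref{eq.extended.bim} to $ax^{n-1}$, i.e.\ beyond $R$, which the statement does not provide.

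This cannot be patched, because uniqueness is false in this generality. Take $R\neq 0$ with zero multiplication, $\sigma=0$, $s=0$, $\alpha_R=\delta_R=0$ and $w=e=0$; all of (i)--(iii) and (a)--(c) hold trivially. Then:
\begin{itemize}
\item $R[x;0,0]$ has zero multiplication, $\overrightarrow{\bar x}=0$ and $\bar w=\bar e=0$;
\item your (and the paper's) $\tilde\sigma$ is $0$;
\item in a zero algebra any pair of commuting idempotent linear maps is a homothetic datum with $s=0$.
\end{itemize}
So $\tilde\sigma'$ with $\overleftarrow{\tilde\sigma'}=0$ and $\overrightarrow{\tilde\sigma'}(\sum_n a_nx^n)=a_1x$ is a homothetic datum. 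It restricts to $\sigma=0$ on $R$ and satisfies $\bar x\tilde\sigma'=\tilde\sigma'\bar x$ in the whole multiplier algebra, yet $\tilde\sigma'\neq\tilde\sigma$.

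You should therefore either prove existence only, as the paper in effect does, or add hypotheses. If \eqref{eq.extended.bim} is imposed on all of $R[x;\alpha_R,\delta_R]$, your induction does determine the right action. If the Ore extension is moreover non-degenerate, the left action is then forced by $p(\tilde\sigma' q)=(p\tilde\sigma')q$.
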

\begin{proof}
    In the construction of homothetic datum $(\tilde{\sigma}, s)$  we follow closely the proof of \cite[Theorem~3.6]{AndBrzRyb:ext}. Consider linear maps
    \begin{equation}\label{iota.pi}
        \begin{aligned}
        &\iota':  R[x;\alpha_R,\delta_R]  \longrightarrow R({\sigma},s)[x;\alpha_S,\delta_S],  \quad  & ax^n \longmapsto ax^n, \\
        &\pi':  R({\sigma},s)[x;\alpha_S,\delta_S]  \longrightarrow \mathbb{F}[x],  &(a+\zeta\bs)x^n\longmapsto \zeta x^n.
    \end{aligned}
    \end{equation}
    One easily checks that since $\mu(1) = 0$, i.e.\ $\delta_S(\bs)=0$, both are algebra homomorphisms, and clearly $\iota'$ is injective while $\pi'$ is surjective. Furthermore, $\pi'(\sum_n (a_n+\zeta_n\bs)x^n) = 0$ if and only if $\sum_n\zeta_nx^n = 0$, that is all the $\zeta_n$ vanish. This leads to the following short exact sequence of algebras:
    \begin{equation}\label{extension.X}
  \begin{tikzcd}
      0 \arrow[r] & R[x;\alpha_R,\delta_R] \arrow[r,"\iota'"] & R({\sigma},s)[x;\alpha_S,\delta_S] \arrow[r,"\pi'"] & \mathbb{F}[x]\ar[r] &0.
  \end{tikzcd}  
\end{equation}
   The sequence \eqref{extension.X}  splits as the sequence of $\mathbb{F}$-modules with the splitting maps
   $$
   \begin{aligned}
        &p: R({\sigma},s)[x;\alpha_S,\delta_S]   \longrightarrow R[x;\alpha_R,\delta_R],  \quad  & (a+\zeta\bs) x^n \longmapsto ax^n, \\
        &j:  \mathbb{F}[x] \longrightarrow R({\sigma},s)[x;\alpha_S,\delta_S]   ,  &\zeta x^n \longmapsto \zeta\bs x^n .
    \end{aligned}
   $$
  Define a double operator $\tilde{\sigma}$ on $R[x;\alpha_R,\delta_R]$ by declaring 
   $$
   \tilde{\sigma}ax^n = p (\bs\iota'(ax^n)) , \qquad ax^n \tilde{\sigma}= p (\iota'(ax^n)\bs).
   $$
   Arguing as in the proof of \cite[Theorem~3.6]{AndBrzRyb:ext} one finds that since $\bs\in {\pi'}^{-1}(1)$ and \eqref{extension.X} is a split-exact sequence of $\mathbb{F}$-modules, 
   $$
   \iota'(\tilde{\sigma}ax^n) = \bs\iota'(ax^n) , \qquad \iota'(ax^n \tilde{\sigma})= \iota'(ax^n)\bs.
   $$
   Note that in deriving this conclusion, the fact that the sequence \eqref{extension.X} splits at $R({\sigma},s)[x;\alpha_S,\delta_S]$ plays crucial role.
   The injectivity of the algebra map $\iota'$ yields that $\tilde{\sigma}$ is a double homothetism. And since $\bs^2 - \bs = s$, $s$ complements $\tilde{\sigma}$ to the homothetic datum $(\tilde{\sigma},s)$.

   Note that 
   $$
   \tilde{\sigma} ax^n = p (\bs a x^n)= p(\sigma ax^n) = \sigma a x^n,
   $$
   while, 
   $$
   a \tilde{\sigma} = p(a\bs) = p(a\sigma) = a\sigma,
   $$
   so that $\tilde{\sigma}\mid_R = \sigma$. Moreover, using statements (iii) in Proposition~\ref{prop.alpha.ext} and (c) Proposition~\ref{prop.deriv.ext}
   $$
   \begin{aligned}
       \bar x\tilde{\sigma} ax^{n}  &= x(\sigma a)x^n = \alpha_R(\sigma a)x^{n+1} + \delta_R(\sigma a)x^{n}\\
       &= (w+\sigma)\alpha_R(a)x^{n+1} (w+\sigma)\delta_R(a)x^n +eax^n \\
       &= ((w+\sigma) x +e)ax^n = ((\bar w+\sigma) \bar x +\bar e)ax^n. 
   \end{aligned}
   $$
   Finally,
   $$
   \begin{aligned}
       a \bar x\tilde{\sigma} &=p(ax\bs) = p(a(w+\bs)x) + ae = awx + a\sigma x +ae = a((\bar w+\sigma) \bar x +\bar e),
   \end{aligned}
   $$
   so indeed $\tilde{\sigma}$ satisfies properties \eqref{eq.extended.bim}. 
\end{proof}

\begin{remark}\label{rem.sigma}
    Note that, 
    $$
    ax^n\tilde{\sigma} = p(ax^n\bs) = a\sigma x^n + ap(x^n\bs) = a\sigma x^n + ap\left(\sum_{i=0}^n\Gamma[\alpha_S,\delta_S]_i^n\bs x^i\right). 
    $$
    This gives rise to the following formal commutation rule on $R[x;\alpha_R,\delta_R]$,
    \begin{equation}\label{eq.xn.sigma}
        \bar{x}^n\tilde{\sigma} = \tilde{\sigma}\bar{x}^n + \sum_{i=0}^n\bar\Gamma[\alpha_S,\delta_S]_i^n\bar x^i,
    \end{equation}
    where $\bar\Gamma[\alpha_S,\delta_S]_i^n$ denotes the bi-multiplication  induced by the  elements 
    $p\left(\Gamma[\alpha_S,\delta_S]_i^n\bs\right)$. One easily finds that, for example, 
    $$
    \bar\Gamma[\alpha_S,\delta_S]_n^n = \sum_{i=0}^n \alpha_R^i(w) \quad 
    \mbox{and}\quad \bar\Gamma[\alpha_S,\delta_S]_0^n = \delta_R^{n-1}(e).
    $$
\end{remark}

In view of Proposition~\ref{prop.deriv.ext} and Theorem~\ref{thm.data.ext}, given a homothetic datum and a compatible skew derivation, two combinations of two consecutive extensions of $R$ can be considered. One is the Ore extension of a homothetic extension, the other is the homothetic extension of the Ore extension. The relationship between them is described in the following proposition. 
\begin{proposition}\label{prop.Ore-hom}
    Let $(\sigma, s)$ be a homothetic datum on an algebra $R$ and let $(\alpha_R,w;\delta_R, e,\mu(1))$ be a compatible skew derivation of type 1. Let $(\alpha_S, \delta_S)$ be the extension of $(\alpha_R,w;\delta_R, e,\mu(1))$ to $R(\sigma, s)$ as in Proposition~\ref{prop.deriv.ext} and let $(\tilde\sigma, s)$ be the homothetic datum on $R[x;\alpha_R,\delta_R]$ defined in Theorem~\ref{thm.data.ext}. Then, there exists an algebra monomorphism $\varphi: R[x;\alpha_R,\delta_R](\tilde\sigma, s)  \longrightarrow R({\sigma},s)[x;\alpha_S,\delta_S]$ rendering commutative the following diagram of algebra maps with exact rows,
    \begin{equation}\label{diag.Ore-hom}
  \begin{tikzcd}
      0 \arrow[r] & R[x;\alpha_R,\delta_R] \arrow[r,"\tilde\iota"]\arrow[d,equal] & R[x;\alpha_R,\delta_R](\tilde\sigma, s) \arrow[r,"\tilde\pi"] \arrow[hookrightarrow,"\varphi"]{d} & \mathbb{F}\ar[r]\arrow[hookrightarrow]{d} &0\\
      0 \arrow[r] & R[x;\alpha_R,\delta_R] \arrow[r,"\iota'"] &  R({\sigma},s)[x;\alpha_S,\delta_S]\arrow[r,"\pi'"] & \mathbb{F}[x]\ar[r] &0,
  \end{tikzcd}  
\end{equation}
where $\tilde{\iota}, \tilde{\pi}$ are the canonical maps determining homothetic extension, $\iota',\pi'$ are defined in \eqref{iota.pi}, while the last vertical map is the inclusion of scalars.
\end{proposition}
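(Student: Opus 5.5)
The natural candidate for $\varphi$ is the map sending $\bs$ to $\bs$. Elements of $R[x;\alpha_R,\delta_R](\tilde\sigma,s)$ have the form $\sum_n a_nx^n+\xi\tilde{\bs}$, where $\tilde{\bs}$ is the formal generator of Remark~\ref{rem.hom} attached to the datum $(\tilde\sigma,s)$. I would define
$$
\varphi\Bigl(\sum_n a_nx^n + \xi\tilde{\bs}\Bigr) = \sum_n a_nx^n + \xi\bs = \iota'\Bigl(\sum_n a_nx^n\Bigr) + \xi\bs \in R(\sigma,s)[x;\alpha_S,\delta_S],
$$
which is $\iota'$ on $R[x;\alpha_R,\delta_R]$ and $j$ (from the proof of Theorem~\ref{thm.data.ext}) on the scalar part. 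Linearity is clear.

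Next, $\varphi$ is injective. The $\bs x^0$-coefficient of $\varphi(P+\xi\tilde{\bs})$ is $\xi$, and $\iota'$ is injective. So $\varphi(P+\xi\tilde{\bs})=0$ forces $\xi=0$ and then $P=0$.

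Commutativity of \eqref{diag.Ore-hom} is immediate. In the left square, $\varphi\tilde\iota=\iota'$ by definition. In the right square, $\pi'\varphi(P+\xi\tilde{\bs})=\pi'(\iota'(P))+\pi'(\xi\bs)=\xi=\xi x^0$, which is the image of $\tilde\pi(P+\xi\tilde{\bs})=\xi$ under the inclusion of scalars. Exactness of both rows is already known: the top row is the homothetic extension \eqref{hom.extension}, and the bottom row is \eqref{extension.X}.

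The main step is multiplicativity. By \eqref{eq.mult},
$$
(P+\xi\tilde{\bs})(Q+\zeta\tilde{\bs}) = PQ + \zeta P\tilde\sigma + \xi\tilde\sigma Q + \xi\zeta s + \xi\zeta\tilde{\bs}.
$$
Comparing with the product of images in $R(\sigma,s)[x;\alpha_S,\delta_S]$, multiplicativity of $\varphi$ is equivalent to four identities:
\begin{itemize}
\item $\iota'(PQ)=\iota'(P)\iota'(Q)$;
\item $\iota'(P\tilde\sigma)=\iota'(P)\bs$;
\item $\iota'(\tilde\sigma Q)=\bs\iota'(Q)$;
\item $\bs^2=s+\bs$.
\end{itemize}
The first holds because $\iota'$ is an algebra homomorphism; this uses $\mu(1)=0$, as noted in the proof of Theorem~\ref{thm.data.ext}. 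The fourth is \eqref{sigma.rules}. The middle two are exactly the identities $\iota'(\tilde\sigma ax^n)=\bs\iota'(ax^n)$ and $\iota'(ax^n\tilde\sigma)=\iota'(ax^n)\bs$ derived in the proof of Theorem~\ref{thm.data.ext}, extended linearly.

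The only point needing care is that those identities really hold, i.e.\ that $\bs\iota'(P)$ and $\iota'(P)\bs$ lie in the image of $\iota'$ rather than merely projecting onto it under $p$. I would argue this as follows. Since $\iota'(R[x;\alpha_R,\delta_R])=\ker\pi'$ is an ideal of $R(\sigma,s)[x;\alpha_S,\delta_S]$, both products lie in it. Because $p$ restricts to the inverse of $\iota'$ on this ideal, we get $\iota' p(\bs\iota'(P))=\bs\iota'(P)$, and similarly on the right. With these four identities in hand, $\varphi$ is an algebra monomorphism and the proof is complete.
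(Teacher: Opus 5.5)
Your proof is correct, but you establish multiplicativity differently from the paper. The paper uses the same $\varphi$, but it checks $\varphi(XY)=\varphi(X)\varphi(Y)$ on monomials by a double induction on degree. It first treats $ax+\eta\bs$ against $b+\xi\bs$, then against $bx^n+\xi\bs$, then raises the degree of the left factor. Each step is an explicit computation in $R(\sigma,s)[x;\alpha_S,\delta_S]$ using $x\bs=(w+\bs)x+e$ and the commutation rule \eqref{eq.extended.bim}.

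You instead reduce everything to three identities: $\iota'(\tilde\sigma P)=\bs\,\iota'(P)$, $\iota'(P\tilde\sigma)=\iota'(P)\bs$, and $\bs^2=\bs+s$. You justify the first two directly from the construction of $\tilde\sigma$ via $p$: $\iota'(R[x;\alpha_R,\delta_R])=\ker\pi'$ is an ideal, and $\iota' p$ is the identity on it.

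Your route gains transparency. It shows the proposition is an instance of a general fact: if $I$ is an ideal of an algebra $B$ and $t\in B$ satisfies $t^2-t\in I$ and $\mathbb{F}t\cap I=0$, then $I+\mathbb{F}t$ is the homothetic extension of $I$ by the double homothetism given by multiplication by $t$. It also never manipulates formal expressions such as $\tilde\sigma x$, which the paper's inductive step uses even though they are not elements of $R[x;\alpha_R,\delta_R](\tilde\sigma,s)$. The paper's computation, in turn, shows concretely how the explicit characterisation \eqref{eq.extended.bim} of $\tilde\sigma$, rather than its abstract definition through $p$, governs the products.

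One small correction: $\iota'$ is multiplicative simply because $\alpha_S$ and $\delta_S$ restrict to $\alpha_R$ and $\delta_R$. The hypotheses $\varsigma=1$ and $\mu(1)=0$ are what make $\pi'$ an algebra map, and that is where your ideal argument uses them. The ideal property of $\iota'(R[x;\alpha_R,\delta_R])$ can also be seen directly from $R$ being an ideal of $R(\sigma,s)$.
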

\begin{proof}
    Consider the linear monomorphism
    $$
    \varphi: R[x;\alpha_R,\delta_R](\tilde\sigma, s)  \longrightarrow R({\sigma},s)[x;\alpha_S,\delta_S], \qquad a+\xi\bs\mapsto \iota'(a)+ \xi\bs.
    $$
    That $\varphi$ fits in the commutative diagram \eqref{diag.Ore-hom} is immediate, so only the multiplicativity of $\varphi$ must be checked. Note that $\varphi\mid_{R({\sigma},s)}$ is the identity map. Also note that, for all $m,n\in \mathbb{N}$, $a\in R$ and $\eta \in \mathbb{F}$,
    $$
    \varphi(ax^{m+n}+ \eta\bs) = \varphi(ax^{m})x^n +  \eta\bs.
    $$
    Throughout the proof, $a,b\in R$ and $\eta,\xi\in \mathbb{F}$ are arbitrary.
    
    We will prove the multiplicativity by induction (on the degrees of the monomials), starting with 
    $$
    \begin{aligned}
        \varphi(ax+\eta\bs)\varphi(b+\xi\bs) &= ax(b+\xi\bs) + \eta\bs (b+\xi\bs)\\
        &=a(\alpha_R(b)+ \xi w+\xi\bs)x + a(\delta_R(b)+ \xi e) +\eta\sigma b +\eta\xi s+ \eta\xi\bs\\
        &=(a\alpha_R(b)+ \xi aw+\xi a{\sigma})x +a\delta_R(b)+ \xi ae +\eta\sigma b +\eta\xi s+ \eta\xi\bs\\
        &= a\alpha_R(b)x+ \xi awx+\xi a{\sigma}x +a\delta_R(b)+ \xi ae +\eta\sigma b +\eta\xi s+ \eta\xi\bs\\
        &=a\alpha_R(b)x+ \xi ax\tilde{\sigma} +a\delta_R(b) +\eta\sigma b +\eta\xi s+ \eta\xi\bs =\varphi((ax+\eta\bs)(b+\xi\bs)),
    \end{aligned}
    $$
    where, apart from the definition of $\varphi$, we use the definition of $(\alpha_S,\delta_S)$ and of the multiplication in the Ore extension $R({\sigma},s)[x;\alpha_S,\delta_S]$, and the relations \eqref{eq.extended.bim}. 

    Next, assume inductively that 
    $$
    \varphi(ax+\eta\bs)\varphi(bx^n+\xi\bs) = \varphi((ax+\eta\bs)(bx^n+\xi\bs)).
    $$
    Then, using the rules of multiplication in $R(\sigma,s)$ and $R[x;\alpha_R,\delta_R]$ we obtain
    $$
    \begin{aligned}
       \varphi(ax+\eta\bs)\varphi(bx^{n+1}+\xi\bs) &= \varphi(ax+\eta\bs)(\varphi(bx^n)x +\xi\bs)\\
       &= \varphi((ax+\eta\bs)bx^n)x +\varphi(\xi(ax+\eta\bs))\bs\\
       &= \varphi(a\alpha_R(b)x^{n+1} + a\delta_R(b)x^{n} +\eta\sigma bx^n)x +
       \varphi(\xi ax\tilde\sigma + \eta\xi s+ \eta\xi\bs)\\
       &=\varphi(a\alpha_R(b)x^{n+2} + a\delta_R(b)x^{n+1} +\eta\sigma bx^{n+1} +\xi ax\tilde\sigma +\eta\xi s+ \eta\xi\bs)\\
       &=\varphi(ax bx^{n+1} +\eta\sigma bx^{n+1} +\xi ax\tilde\sigma +\eta\xi s+\eta\xi\bs) = \varphi((ax+\eta\bs)(bx^{n+1}+\xi\bs)).
    \end{aligned} 
    $$
    Thus, $\varphi$ is multiplicative on products of $ax+\eta\bs$ with $bx^n+\xi\bs$, for all $n\in \mathbb{N}$.
    Finally, assume inductively that there is $m\in \mathbb{N}$, such that for all $n\in\mathbb{N}$,
    $$
    \varphi(ax^m+\eta\bs)\varphi(bx^n+\xi\bs) = \varphi((ax^m+\eta\bs)(bx^n+\xi\bs)).
    $$
    Then
    $$
    \begin{aligned}
        \varphi(ax^{m+1}+\eta\bs)&\varphi(bx^n+\xi\bs) = (\varphi(ax^{m})x+\eta\bs) \varphi(bx^n+\xi\bs)\\
        &= \varphi(ax^{m})x(bx^n + \xi\bs)+ \varphi(\eta\bs)(bx^n+\xi\bs)\\
        &= \varphi(ax^{m})(\alpha_R(b)x^{n+1} + \delta_R(b)x^n + (\xi w+\xi\tilde\sigma)x+ \xi e)
        +\varphi(\eta\sigma bx^n+\eta\xi s +\eta\xi\bs)\\
        &=\varphi(ax^{m})\varphi(\alpha_R(b)x^n)x + \varphi(ax^{m})\varphi(\delta_R(b)x^n +\xi wx +\xi e + \xi \tilde\sigma x)
        +\varphi(\eta\sigma bx^n+\eta\xi s+ \eta\xi\bs)\\
        &= \varphi(ax^{m}\alpha_R(b)x^{n+1}+ax^{m}\delta_R(b)x^n +\xi ax^mwx +\xi ax^{m} e + \xi ax^m\tilde{\sigma}x + \eta\sigma bx^n+\eta\xi s +\eta\xi\bs)\\
        &=\varphi(ax^{m+1}bx^{n} + \xi ax^{m+1}\tilde{\sigma} + \eta\sigma bx^n+\eta\xi s+ \eta\xi\bs) = \varphi((ax^{m+1}+\eta\bs)(bx^n+\xi\bs)).
    \end{aligned}
    $$
    Here in the crucial fifth equality the inductive assumption have been used. This completes the proof of the multiplicativity of $\varphi$.
\end{proof}

\begin{remark}\label{rem.zero.case}
    It is not clear how to lift the restriction on $\varsigma$ in Theorem~\ref{thm.data.ext} and Proposition~\ref{prop.Ore-hom}. While the algebra map $\pi':R({\sigma},s)[x;\alpha_S,\delta_S]\to \mathbb{F}[x]$ such that $\pi'\iota'=0$ can be defined in general by setting $\pi'\left((a+\zeta\bs)x^n\right)= \zeta (\varsigma x+\mu(\varsigma))^n$, this map fails to be surjective for $\varsigma=0$. On the other hand and still in the case $\varsigma =0$ the kernel of the corestriction of $\pi'$ to its image $\mathbb{F}$ contains elements that are not in the image of $\iota'$, as $0 = \sum_n\zeta_n \mu(0)^n =\pi'(\sum_n\zeta_n x^n)$ does not necessarily imply that $\zeta_n=0$, for all $n$. Whether or not the results of Theorem~\ref{thm.data.ext} and Proposition~\ref{prop.Ore-hom} can be extended to the case $\varsigma=0$ is left as an open question at this stage.
\end{remark}

\begin{example}
    To illustrate Theorem~\ref{thm.data.ext} we consider extensions of the zero map on $\daleth_n$  to endomorphisms $\theta^1$ of $\daleth_n(\varepsilon_k,0)$ in equations~\eqref{fam.daleth} and extensions of derivations $\delta_R$ in equation \eqref{r-lin}. In the case \eqref{fam.daleth.1} in which $w=ve_{1k}$ condition (a) in Proposition~\ref{prop.deriv.ext} reduces possible elements $e$ to the form $e= u_1 e_{1k} + u_2 e_{kn} + vu_2e_{1n}$. While condition (b) yields no constraints, (c) yields $u_2=0$ and $u_1 = - \gamma_kv$. Hence every map $\delta_R$ and for $\theta^1$ in \eqref{fam.daleth} can be extended to the $\theta^1$-skew derivation $\delta_S$ of  $\daleth_n(\varepsilon_k,0)$ with $e=-\gamma_k\upsilon e_{1k}$, so that $\delta_S(e_{kk}) =-\gamma_k\upsilon e_{1k}$. By Theorem~\ref{thm.data.ext}, the homothety  $\varepsilon_k$ extends to the homothety $\tilde{\varepsilon}_k$ on $\daleth_n[x;\theta^1,\delta_R]$,
    $$
    (e_{1i}x)\tilde{\varepsilon}_k = (\upsilon\delta_{1i}+\delta_{ik}e_{1k})x - \delta_{1i}\gamma_k\upsilon e_{1k}, \quad (e_{jn}x)\tilde{\varepsilon}_k =0, \quad \tilde{\varepsilon}_k(e_{1i}x)=0, \quad \tilde{\varepsilon}_k(e_{jn}x) = \delta_{jk}e_{kn}x,
    $$
    for all $i=1,\ldots, n$ and $j=2,\ldots,n$.

    While there are no restrictions on either $\theta^1$ in \eqref{fam.daleth.1} or $\delta_R$ for admitting an extension to $\delta_S$, the situation is different in the case of $\theta^1$ as in \eqref{fam.daleth.2}. Since $w=e_{11} + \sum_{j=2}^n\upsilon_j e_{1j}$, condition (a) in Proposition~\ref{prop.deriv.ext} forces $e$ to be of the form
    $$
    e = \sum_{i\neq k}u_i e_{1i} + u_ke_{kn},
    $$
    provided $u_k\upsilon_k =0$.
    As before there are no restrictions arising from (b), while (c) implies that $u_1=-\gamma_1$, $u_k=0$,  $u_i = -\gamma_i\upsilon_1 $, $i=2,\dots k-1,k+1,\ldots, n$ and $\gamma_k\upsilon_k=0$. Thus, provided that $\gamma_k\upsilon_k=0$, $\delta_R$ can be extended to $\theta^1$-derivation $\delta_S$ with 
    \begin{equation}\label{delta.ex2}
       \delta_S(e_{kk})=-\gamma_1e_{11} - \sum_{i=2, i\neq k}^n \gamma_i\upsilon_i e_{1i}. 
    \end{equation}
    If $\gamma_k\upsilon_k\neq 0$, no extension exists.  The extended double homothetism corresponding to the derivation \eqref{delta.ex2} comes out as
    $\tilde{\varepsilon}_k$ on $\daleth_n[x;\theta^1,\delta_R]$,
    $$
    \begin{aligned}
        (e_{1i}x)\tilde{\varepsilon}_k &= \left(\delta_{1i}(e_{11} + \sum_{j=2}^n\upsilon_j e_{1j})+\delta_{ik}e_{1k}\right)x - \delta_{1i}(\gamma_1e_{11}+\sum_{j=2, j\neq k}^n\gamma_j\upsilon_j e_{1j}),\\
         (e_{jn}x)\tilde{\varepsilon}_k &=0, \qquad \tilde{\varepsilon}_k(e_{1i}x)=0, \qquad \tilde{\varepsilon}_k(e_{jn}x) = \delta_{jk}e_{kn}x,
    \end{aligned}  
    $$
    for all $i=1,\ldots, n$ and $j=2,\ldots,n$.

    Restrictions also occur in the cases \eqref{fam.daleth.3} and \eqref{fam.daleth.4}. In the former, an extension exists provided $\upsilon_n\gamma_n = \upsilon^j\gamma_n=0$, for all $j\neq k$, and then $\delta_S(e_{kk}) = -\upsilon^k\gamma_ne_{kn}$, which leads to the double homothety $\tilde{\varepsilon}_k$ given by
    $$
    (e_{1i}x)\tilde{\varepsilon}_k = \left(\delta_{1i}\upsilon_ne_{1n} + (1-\delta_{1i})(1-\delta_{in})\upsilon^ie_{1n} +\delta_{ik}e_{1k}\right)x - \delta_{ik}\upsilon^k\gamma_ne_{1n},
    $$
    and by the same formulae as in the preceding cases. 
    
    For $\theta^1$ in \eqref{fam.daleth.4} an extension of $\delta_R$ exists, provided $\gamma_n\upsilon_j\upsilon^j = 0$, for all $j=2,\ldots, k-1,k+1,\ldots n-1$. If these conditions are satisfied, then
    $$
    \delta_S(e_{kk}) = -\left(\gamma_1 e_{11} + \sum_{j=2,j\neq k}^{n-1} (\gamma_j \upsilon_j e_{1j} +\gamma_n \upsilon^j e_{jn}) + \gamma_n e_{nn}\right).
    $$
    Deriving the form of the double homothety $\tilde{\varepsilon}_k$ is left to the reader.
    
    It might be worth noting that, whenever they exist, extensions of the right $\daleth_n$-module map $\delta_R$ \eqref{r-lin} to $\theta^1$-derivations of $\daleth_n(\varepsilon_k, 0)$ are unique in each case.
\end{example}
\section*{Acknowledgements}

The research of Tomasz Brzeziński is partially supported by the National Science Centre, Poland, through the WEAVE-UNISONO grant no.\ 2023/05/Y/ST1/00046.

\end{document}